\documentclass{amsart}
\usepackage{graphicx} 
\usepackage{arydshln}
\usepackage{hyperref}
\usepackage{txfonts, amsmath,amstext,amsthm,amscd,amsopn,verbatim,amssymb, amsfonts}
\usepackage{fullpage}
\usepackage{todonotes}

\usepackage[bbgreekl]{mathbbol}
\usepackage{enumerate}

\usepackage{float}
\restylefloat{table}

\usepackage{tikz}
\usepackage{tikz-cd}
\usetikzlibrary{matrix}
\usetikzlibrary{shapes}
\usetikzlibrary{arrows}
\usetikzlibrary{calc,3d}
\usetikzlibrary{decorations,decorations.pathmorphing,decorations.pathreplacing,decorations.markings}
\usetikzlibrary{through}

\tikzset{snakeit/.style={decorate, decoration={snake, amplitude=.2mm,segment length=1mm}}}

\tikzset{ext/.style={circle, draw,inner sep=1pt}, int/.style={circle,draw,fill,inner sep=2pt},nil/.style={inner sep=1pt}}
\tikzset{cy/.style={circle,draw,fill,inner sep=2pt},scy/.style={circle,draw,inner sep=2pt},scyx/.style={draw,cross out,inner sep=2pt},scyt/.style={draw,regular polygon,regular polygon sides=3,inner sep=0.95pt}}
\tikzset{exte/.style={circle, draw,inner sep=3pt},inte/.style={circle,draw,fill,inner sep=3pt}}
\tikzset{diagram/.style={matrix of math nodes, row sep=3em, column sep=2.5em, text height=1.5ex, text depth=0.25ex}}
\tikzset{diagram2/.style={matrix of math nodes, row sep=0.5em, column sep=0.5em, text height=1.5ex, text depth=0.25ex}}
\tikzset{rowcolsep/.style={column sep=.2cm, row sep=.1cm}}

\tikzset{
  crossed/.style={
    decoration={markings,mark=at position .5 with {\arrow{|}}},
    postaction={decorate},
    shorten >=0.4pt}}

\tikzset{every picture/.style={baseline=-.65ex} }

\theoremstyle{plain}
  \newtheorem{thm}{Theorem}
  \newtheorem{defi}[thm]{Definition}
  \newtheorem{prop}[thm]{Proposition}
  
  \newtheorem{cor}[thm]{Corollary}
  \newtheorem{conjecture}[thm]{Conjecture}
  \newtheorem{lemma}[thm]{Lemma}
\theoremstyle{definition}
  \newtheorem{ex}{Example}
  \newtheorem{rem}{Remark}

\newcommand{\alg}[1]{\mathfrak{{#1}}}


\newcommand{\ad}{{\text{ad}}}



\newcommand{\FreeLie}{\mathrm{FreeLie}}

\newcommand{\bpm}{\begin{pmatrix}}
\newcommand{\epm}{\end{pmatrix}}

\newcommand{\cy}{\mathfrak{c}}

\DeclareMathOperator{\gr}{gr}
\newcommand{\tder}{\alg{tder}}
\newcommand{\sder}{\alg{sder}}
\newcommand{\kv}{\alg{kv}}

\newcommand{\grt}{\alg {grt}}

\DeclareMathOperator{\vdim}{dim}

\newcommand{\Q}{\mathbb{Q}}


\usepackage{tikz-cd}
\tikzset{%
    symbol/.style={%
        draw=none,
        every to/.append style={%
            edge node={node [sloped, allow upside down, auto=false]{$#1$}}}
    }
}

\DeclareMathOperator{\Log}{Log}

\newcommand{\BK}{\mathrm{BK}}

\newcommand{\f}{\mathfrak{f}}
\newcommand{\as}{\mathfrak{a}}

\newcommand{\lkv}{\mathfrak{lkv}}
\newcommand{\free}{\mathfrak{free}}
\newcommand{\ds}{\mathfrak{ds}}
\newcommand{\lds}{\mathfrak{lds}}
\newcommand{\hlkv}{\widehat{\mathfrak{lkv}}}
\newcommand{\hkv}{\widehat{\mathfrak{kv}}}
\title{Numerical computation of linearized KV and the Deligne-Drinfeld and Broadhurst-Kreimer conjectures}

\author{Florian Naef}
\address{School of Mathematics, Trinity College, Dublin 2, Ireland }
\author{Thomas Willwacher }
\address{Department of Mathematics, ETH Zurich, Rämistrasse 101, 8092 Zurich, Switzerland}

\thanks{This work has been partially supported by the NCCR Swissmap, funded by the Swiss National Science Foundation}

\begin{document}
\begin{abstract}
We compute numerically the dimensions of the graded quotients of the linearized Kashiwara-Vergne Lie algebra $\hlkv_2$ in low weight, confirming a conjecture of Raphael-Schneps in those weights.
The Lie algebra $\hlkv_2$ appears in a chain of inclusions of Lie algebras, including also the linearized double shuffle Lie algebra and the (depth associated graded of the) Grothendieck-Teichmüller Lie algebra. Hence our computations also allow us to check the validity of the Deligne-Drinfeld conjecture on the structure of the Grothendieck-Teichmüller group up to weight 29, and (a version of) the the Broadhurst-Kreimer conjecture on the number of multiple zeta values for a range of weight-depth pairs significantly exceeding the previous bounds.
Our computations also verify a conjecture by Alekseev-Torossian on the Kashiwara-Vergne Lie algebra up to weight 29.
\end{abstract}

\maketitle

\section{Introduction}

We consider the following well-studied chain of inclusions of Lie algebras (\cite{Brown, AlekseevTorossian, Furusho, FurushoFour, Schneps})
\begin{equation}\label{equ:chain}
\free \subset \grt_1 \subset \ds \subset \hkv_2,
\end{equation}
where $\free$ is a free Lie algebra in generators $\sigma_{2k+1}$, $k=1,2,\dots$, $\grt_1$ is the Grothendieck-Teichmüller Lie algebra introduced by Drinfeld \cite{Drinfeld}, $\ds$ is Racinet's double shuffle Lie algebra, and $\hkv_2$ is the Kashiwara-Vergne Lie algebra of Alekseev-Torossian \cite{AlekseevTorossian}.
These Lie algebras all govern objects of high interest in (arguably) distinct areas of mathematics, which are hence linked by the above chain. Concretely, $\free$ is (essentially) the Lie algebra of the Galois group of the category $\mathit{MTM}(\mathbb Z)$ of mixed Tate motives over $\mathbb Z$. The Grothendieck-Teichmüller group is the homotopy automorphism group of the rationalized little 2-disks operad and appears in various places in algebraic geometry and algebraic topology. The double shuffle Lie algebra $\ds$ governs the space of formal solutions of the double-shuffle relations, that are in particular satisfied by the multiple zeta values (MZVs). Knowledge of $\ds$ hence yields bounds on the number of $\Q$-linearly independent MZVs, an object of high interest in number theory and algebraic geometry. Finally, $\hkv_2$ controls the space of solutions of the Kashiwara-Vergne conjecture in Lie theory.

The four Lie algebras in \eqref{equ:chain} carry a grading by weight and additionally a filtration by depth, and the inclusions are compatible with both the weight grading and the depth filtration.
In addition, there is a "linearized" variant $\lds$ of the double shuffle Lie algebra $\ds$, obtained by keeping only the terms of minimal depth in the defining equations \cite{BrownDepth}. Similarly, there is also a linearized version $\hlkv_2$ of the Kashiwara-Vergne Lie algebra $\hkv_2$ introduced by Raphael-Schneps \cite{RaphaelSchneps}, which again may be defined by dropping terms of higher depth in the defining equations, see section \ref{sec:kv def} below.
\begin{thm}[Raphael-Schneps]\label{thm:rs}
We have a diagram of inclusions of bigraded (by weight and depth) Lie algebras
\begin{equation}
\label{equ:rsdiag}
\begin{tikzcd}
\gr \ds \ar[hookrightarrow]{r}\ar[hookrightarrow]{d} & \gr \hkv_2 \ar[hookrightarrow]{d}\\
\lds \ar[hookrightarrow]{r} & \hlkv_2
\end{tikzcd}.
\end{equation}
Here $\gr(-)$ denotes the associated gradded with respect to the depth filtration.
Furthermore, the inclusions are isomorphisms in depth $\leq 3$.
\end{thm}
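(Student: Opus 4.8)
The plan is to realize all four objects as subspaces of one common ambient Lie algebra $\mathcal A$ that is bigraded by weight and depth, with a bracket satisfying $[F^p\mathcal A, F^q\mathcal A]\subseteq F^{p+q}\mathcal A$ for the depth filtration $F^\bullet$. Both $\ds$ and $\hkv_2$ are cut out of $\mathcal A$ by defining relations that are homogeneous in weight but \emph{inhomogeneous} in depth, while nonetheless respecting the depth filtration; the linearized algebras $\lds$ and $\hlkv_2$ are by definition cut out by the minimal-depth (leading) components of those same relations, and are therefore bigraded subspaces of $\mathcal A$. All four arrows of the square will then be restrictions of two elementary operations on $\mathcal A$: the associated-graded ("leading term") map and the inclusion of subspaces.

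First I would construct the two vertical injections. For a depth-filtered subalgebra $L$ (either $\ds$ or $\hkv_2$) defined by relations $E$, send the class in $\gr_d L$ of an element $x\in F^dL$ to its depth-$d$ component $x_d$. Since the minimal-depth component of the relation $E(x)=0$ is precisely the defining relation of the corresponding $L^{\mathrm{lin}}$ (i.e. $\lds$ or $\hlkv_2$) applied to $x_d$, the element $x_d$ lies in $L^{\mathrm{lin}}$; the assignment is well defined and injective, because $x_d=0$ forces $x\in F^{d+1}L$. As $[F^p,F^q]\subseteq F^{p+q}$, the leading term of $[x,y]$ equals the $\mathcal A$-bracket of the leading terms, so this is a morphism of bigraded Lie algebras. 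This produces $\gr\ds\hookrightarrow\lds$ and $\gr\hkv_2\hookrightarrow\hlkv_2$.

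For the horizontal maps, the inclusion $\ds\subseteq\hkv_2$ induces the depth filtration on $\ds$ by restriction, so the associated-graded functor yields the injection $\gr\ds\hookrightarrow\gr\hkv_2$. The injection $\lds\hookrightarrow\hlkv_2$ I would obtain by checking that the linearized double-shuffle relations imply the linearized Kashiwara–Vergne relations, i.e. by taking the minimal-depth part of the known implication ``double shuffle $\Rightarrow$ KV'' and verifying that this implication survives truncation to leading depth; this is the one genuinely relation-specific computation among the injections. Commutativity of the square is then automatic, since every arrow is a restriction of either the identity or the leading-term map on $\mathcal A$.

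The substantive claim is that all four arrows become isomorphisms in depth $\leq 3$. By the commuting square it suffices to prove this for three of them, the fourth following formally: if $\gr\ds\to\lds$, $\gr\ds\to\gr\hkv_2$ and $\gr\hkv_2\to\hlkv_2$ are isomorphisms in depth $\leq 3$, then so is $\lds\to\hlkv_2$. Each vertical map $\gr L\hookrightarrow L^{\mathrm{lin}}$ is surjective in depth $d$ exactly when every solution of the linearized relations in depth $d$ lifts to a genuine solution of $E$; one attempts to build such a lift $x=x_d+x_{d+1}+\cdots$ by successive depth, and the obstruction to improving depth-$\le k$ accuracy to depth-$(k+1)$ accuracy is a class in an explicit finite-dimensional, weight-homogeneous quotient of $\mathcal A$. \textbf{The main obstacle, and the technical heart of the theorem, is to show that these obstructions vanish for $d\le 3$.} In low depth this reduces to finite weight-by-weight linear algebra, together with the known structural description of the depth-$\le 2$ pieces (via period polynomials of modular forms) and the depth-$3$ analysis of Raphael–Schneps \cite{RaphaelSchneps}; the fact that a nonzero obstruction can first appear in depth $4$ is precisely why the statement is restricted to depth $\le 3$.
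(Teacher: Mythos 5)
A preliminary remark: the paper does not actually prove Theorem \ref{thm:rs} --- it imports it as a known result of Raphael--Schneps \cite{RaphaelSchneps} (the $\gr\ds\hookrightarrow\lds$ part going back to Brown \cite{BrownDepth}), so there is no in-paper argument to compare yours against. Judged on its own terms, the formal part of your proposal is the standard and essentially correct skeleton: realizing everything inside a common filtered ambient algebra, defining the vertical maps $\gr L\hookrightarrow L^{\mathrm{lin}}$ by leading depth terms, getting injectivity from $x_d=0\Rightarrow x\in F^{d+1}L$, and getting $\gr\ds\hookrightarrow\gr\hkv_2$ functorially from the inclusion with the induced filtration. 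You do gloss over two points that need checking in this specific setting: the bracket on the linearized side is a \emph{different} bracket ($[-,-]_Y$, the depth-homogeneous leading part of $[-,-]$), and for the extended algebras one must verify that the leading depth component of the condition $\Delta\iota(x)\in V$ defining $\hkv_2$ is exactly the condition $\pi_{\geq 2}\Delta_Y\iota(x_d)=0$ defining $\hlkv_2$; both are true but are not automatic from your general framework.

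The genuine gaps are in the two arrows that carry the actual mathematical content. First, the horizontal inclusion $\lds\hookrightarrow\hlkv_2$ cannot be obtained by ``taking the minimal-depth part of the known implication'' $\ds\subset\hkv_2$: implications between solution sets of inhomogeneous equations do not truncate, and $\lds$ is a priori strictly larger than $\gr\ds$ (its elements need not be leading terms of genuine double-shuffle elements), so one must prove directly that the linearized double-shuffle relations imply the linearized KV relations --- this is real content of \cite{RaphaelSchneps} which your proposal flags but does not carry out. Second, and more seriously, the depth $\leq 3$ isomorphism statement, which you correctly identify as the technical heart, is not proven: your obstruction-vanishing strategy would have to be established uniformly in the weight $W$, i.e.\ for infinitely many weights at once, so ``finite weight-by-weight linear algebra'' cannot do it; and the appeal to ``the depth-3 analysis of Raphael--Schneps'' is circular, since that analysis \emph{is} the theorem being proved. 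As written, the proposal reduces the theorem to itself; either one cites \cite{RaphaelSchneps} for the whole statement (as the paper does, making the framework superfluous), or one must supply the uniform-in-weight depth $\leq 3$ arguments, which are absent.
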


There are several conjectures surrounding our 4 Lie algebras that have been raised by various authors over the years. Combined, they state the following: 
\begin{conjecture}[Deligne-Drinfeld-Ihara, Alekseev-Torossian, Schneps-Raphael]
\label{conj:all iso}
    The inclusions of weight-graded Lie algebras in \eqref{equ:chain} and in \eqref{equ:rsdiag} are all isomorphisms in weights $W\geq 2$.
\end{conjecture}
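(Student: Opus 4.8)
The conjecture is open in general, so the attainable goal --- and the plan I describe --- is to verify it in each fixed weight $W$ throughout whatever range the underlying linear algebra permits. The plan is to reduce the assertion, in a fixed weight $W$, to the single numerical identity $\dim \free_W = \dim (\hlkv_2)_W$. Passing to the depth-associated graded preserves weight-graded dimensions, so $\dim \ds_W = \dim (\gr \ds)_W$ and $\dim (\hkv_2)_W = \dim (\gr \hkv_2)_W$. Combining the chain \eqref{equ:chain} with the square \eqref{equ:rsdiag}, every Lie algebra occurring in either diagram is then sandwiched in weight $W$ between $\free$ and $\hlkv_2$:
\begin{equation*}
\dim \free_W \le \dim (\grt_1)_W \le \dim \ds_W = \dim (\gr \ds)_W \le \dim \lds_W \le \dim (\hlkv_2)_W,
\end{equation*}
together with $\dim \ds_W \le \dim (\hkv_2)_W = \dim (\gr \hkv_2)_W \le \dim (\hlkv_2)_W$. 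Hence if the two extreme dimensions coincide, all intermediate inequalities are forced to be equalities, and each inclusion --- an injection between finite-dimensional spaces of equal dimension --- is an isomorphism in weight $W$. By Theorem~\ref{thm:rs} the maps are already isomorphisms in depth $\le 3$, so the new content lies in depth $\ge 4$.

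The free side is known in closed form. Since $\free$ is free on one generator in each odd weight $\ge 3$, its universal enveloping algebra is free associative on these generators, and Poincar\'e--Birkhoff--Witt gives
\begin{equation*}
\prod_{W \ge 1} (1-t^W)^{\dim \free_W} = 1 - \frac{t^3}{1-t^2} = \frac{1-t^2-t^3}{1-t^2},
\end{equation*}
which determines every $\dim \free_W$; these are exactly the Deligne--Drinfeld predicted dimensions.

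The real work, and the main obstacle, is the computation of $\dim (\hlkv_2)_W$. The whole point of the linearization is that the defining equations of $\hlkv_2$ (see Section~\ref{sec:kv def}), obtained by retaining only the lowest-depth part of the quadratic equations defining $\hkv_2$, are \emph{linear} and homogeneous for the bigrading by weight and depth. Thus in each bidegree $(W,d)$ the space $(\hlkv_2)_{W,d}$ is the kernel of an explicit finite matrix, and $\dim (\hlkv_2)_W = \sum_d \dim (\hlkv_2)_{W,d}$ reduces to rank computations.

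To turn the numerics into a rigorous statement I would compute these ranks over a finite field $\mathbb{F}_p$: reduction modulo $p$ can only lower the rank, so the $\mathbb{F}_p$-dimension of $(\hlkv_2)_{W,d}$ is an upper bound for its true dimension over $\Q$. Together with the lower bound $\dim \free_W \le \dim (\hlkv_2)_W$ supplied by the first line of the sandwich above, an exact match with the free Lie algebra dimensions of the previous step pins the dimension and verifies Conjecture~\ref{conj:all iso} in that weight. The genuine bottleneck is therefore purely computational: the ambient spaces grow rapidly in $W$ and $d$, the matrices become enormous, and it is the feasibility of these rank computations that limits the accessible range of weights.
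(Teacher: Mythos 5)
Your proposal is correct and is essentially the paper's own argument: reduce the weight-$W$ claim to the single equality $\vdim\free^{(W)}=\vdim\hlkv_2^{(W)}$, note that passing to the depth-associated graded preserves weight-graded dimensions, and let the dimension sandwich along \eqref{equ:chain} and \eqref{equ:rsdiag} force every inclusion to be an isomorphism, with the upper bound on $\vdim\hlkv_2^{(W)}$ obtained from mod-$p$ rank computations exactly as in the paper. The only (minor) difference is the source of the lower bound: you use $\vdim\free^{(W)}\leq\vdim\hkv_2^{(W)}=\vdim\gr\hkv_2^{(W)}\leq\vdim\hlkv_2^{(W)}$, which holds unconditionally by the known inclusions, whereas the paper's proof quotes its numerically verified Conjecture \ref{conj:BK 2}, whose lower bounds rest on explicit spanning elements ($\bar\sigma_{2k+1}$, $\bar\rho_P$ and their Lie brackets) --- your route correctly shows those explicit elements are not needed for this particular corollary, though they remain essential for the per-depth Broadhurst--Kreimer statements.
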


Furthermore, there is a conjecture of Broadhurst-Kreimer on the number of linearly independent multiple zeta values in each depth. To state it for $\lds$, let us define the numbers $\BK_{W,D}$ as the Taylor coefficients of the generating function
\begin{equation}\label{equ:BK fun}
\sum_{W,D} s^W t^D \BK_{W,D}
:=
\Log\left(
\frac{1}{1-\frac{s^3t}{1-s^2} - \frac{s^{12}(t^2-t^4)}{(1-s^4)(1-s^6)}}
\right) 
\end{equation}
where $s,t$ are formal variables and $\Log$ is the plethystic logarithm 
\[
\Log(f)(s,t) = 
\sum_{\ell\geq 1}
\frac{\mu(\ell)}{\ell}
\log(f(s^\ell, t^\ell))
\]
with $\mu(-)$ the Moebius function. Then we have:

\begin{conjecture}[Broadhurst-Kreimer conjecture for $\lds$]
\label{conj:BK}
The dimension of the bigraded component $\lds^{(W,D)}\subset \lds$ of weight $W$ and depth $D$ has dimension $\BK_{W,D}$.
\end{conjecture}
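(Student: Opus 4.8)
The plan is to read the generating function \eqref{equ:BK fun} as a statement about the Lie algebra homology of $\lds$ and to prove that statement directly. Write $\mathbb{O}(s) = \frac{s^3}{1-s^2}$ and $\mathbb{S}(s) = \frac{s^{12}}{(1-s^4)(1-s^6)}$, so that the argument of $\Log$ in \eqref{equ:BK fun} is $(1 - \mathbb{O}(s)\,t - \mathbb{S}(s)(t^2 - t^4))^{-1}$. Since the plethystic logarithm is inverse to the plethystic exponential, and the latter computes, via the Poincaré--Birkhoff--Witt theorem, the bigraded Hilbert series of the universal enveloping algebra from the bigraded character of $\lds$, Conjecture \ref{conj:BK} is equivalent to the identity
\[
\mathrm{Hilb}_{U(\lds)}(s,t) = \frac{1}{1 - \mathbb{O}(s)\,t - \mathbb{S}(s)\,t^2 + \mathbb{S}(s)\,t^4}.
\]
The first step is therefore to establish this equivalence, which is purely formal.

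The second step replaces the Hilbert-series identity by a statement about homology. By the Euler--Poincaré principle applied to the Chevalley--Eilenberg complex, one always has $\mathrm{Hilb}_{U(\lds)} \cdot \sum_{i\ge 0}(-1)^i \mathrm{Hilb}_{H_i(\lds)} = 1$, where $H_\bullet(\lds)$ denotes Lie algebra homology with trivial coefficients, bigraded by weight and depth. Hence the conjecture is equivalent to the \emph{Koszulity-type} assertion
\[
\mathrm{Hilb}_{H_1(\lds)} = \mathbb{O}(s)\,t + \mathbb{S}(s)\,t^2, \qquad \mathrm{Hilb}_{H_2(\lds)} = \mathbb{S}(s)\,t^4, \qquad H_i(\lds) = 0 \text{ for } i\geq 3.
\]
In words: $\lds$ is generated in depths $1$ and $2$ (the depth-$1$ generators being the $\sigma_{2k+1}$ and the depth-$2$ generators indexed by cusp forms, since $\mathbb{S}(s)$ is the generating series of $\dim S_w$), its relations are concentrated in depth $4$ and again indexed by cusp forms, and there are no higher syzygies. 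I would take this homological reformulation as the precise target.

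The third step is the low-depth verification, which anchors the reformulation. Using Theorem \ref{thm:rs}, that the inclusions are isomorphisms in depth $\le 3$, together with the depth-$2$ results of Gangl--Kaneko--Zagier and Zagier identifying the cokernel of the bracket map $\Lambda^2(\lds^{(\bullet,1)}) \to \lds^{(\bullet,2)}$ with the space of cusp forms, one checks that $H_1(\lds)$ has the claimed character in depths $\le 2$, that $H_2(\lds)$ vanishes in depth $\le 3$ (equivalently, that $\lds$ is free on its depth-$1$ and depth-$2$ generators through depth $3$), and hence that the Taylor coefficients of \eqref{equ:BK fun} are correct through depth $3$. This matches, and explains, the agreement already guaranteed by Theorem \ref{thm:rs}.

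The main obstacle is the all-depths homological statement of the second step, which is precisely the content that keeps Conjecture \ref{conj:BK} open. It comprises three independent hard assertions: generation in depth $\le 2$ (no exotic generators, i.e. $H_1$ vanishes in depth $\ge 3$), the identification of $H_2$ with the cuspidal period-polynomial relations in depth $4$, and the vanishing $H_{\ge 3} = 0$. To attack these uniformly in all depths I would pass to Ecalle's mould-theoretic model of $\lds$, in which the linearized double-shuffle conditions become explicit alternality and symmetrality constraints on polynomial-valued bimoulds and the depth becomes the number of mould variables; the flexion (ARI/GARI) formalism then offers, in principle, a way to produce a basis adapted to the conjectural generators-and-relations structure and to control the depth filtration at once. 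An alternative route, available because Conjecture \ref{conj:all iso} ties $\lds$ to $\gr \ds$ and the depth-graded motivic Lie algebra, is to import Brown's motivic coproduct and the action of the associated derivations, reducing $H_{\ge 3}=0$ and the shape of $H_2$ to the non-degeneracy of a period-polynomial pairing in every weight-depth bidegree. Either way, the decisive and currently unavailable input is this exhaustion statement; once it is in hand, the Hilbert-series identity, and hence Conjecture \ref{conj:BK}, follows from the formal reductions of the first two steps.
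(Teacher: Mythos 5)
You have not produced a proof, and neither does the paper: Conjecture \ref{conj:BK} is genuinely open, and the paper's contribution is a \emph{finite numerical verification} of the predicted dimensions for the bidegrees in Figure \ref{fig:results}, obtained by computing matching upper and lower bounds on $\vdim \hlkv_2^{(W,D)}$ (upper bound: rank of $\Delta_Y\circ\iota_{W_1,W_2,D}$ over $\mathbb{F}_p$ via Lemma \ref{lem:genset}; lower bound: the span of Lie words in the $\bar\sigma_{2k+1}$ and Brown's $\bar\rho_P$), which then pins down $\lds^{(W,D)}$ through the inclusions of Theorem \ref{thm:rs}. Your text is, by your own admission in the final step, a reduction of the conjecture to an ``exhaustion statement'' that is ``currently unavailable''; that statement \emph{is} the conjecture's hard content, so the proposal is a research program, not a proof, and it also does not reproduce the paper's actual (computational) verification in any range.

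Beyond this, your homological target has the wrong shape, and the error is traceable to a sign. You took \eqref{equ:BK fun} at face value, but as printed that formula has a sign slip: with denominator $1-\mathbb{O}(s)t-\mathbb{S}(s)(t^2-t^4)$ one computes $\BK_{12,2}=3$ and $\BK_{12,4}=-1$, incompatible with the values $1$ and $1$ in Figure \ref{fig:results} (and a negative ``dimension''). The standard Broadhurst--Kreimer denominator is $1-\mathbb{O}(s)t+\mathbb{S}(s)t^2-\mathbb{S}(s)t^4$, and the correct Koszulity-type reformulation (this is Carr--Gangl--Schneps \cite{CGS}) is
\[
\mathrm{Hilb}_{H_1}=\mathbb{O}(s)\,t,\qquad \mathrm{Hilb}_{H_2}=\mathbb{S}(s)\,t^2,\qquad \mathrm{Hilb}_{H_3}=\mathbb{S}(s)\,t^4,\qquad H_{\geq 4}=0:
\]
generators in depth $1$ only, relations in depth $2$ indexed by cusp forms (the restricted even period-polynomial, i.e.\ Ihara--Takao, relations), and syzygies in depth $4$. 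Your version --- extra cusp-form generators in depth $2$, relations in depth $4$, $H_{\geq 3}=0$ --- is refuted already in bidegree $(12,2)$: it would force $\vdim\lds^{(12,2)}\geq 3$, whereas $\vdim\lds^{(12,2)}=1$ (classical, and also forced by Figure \ref{fig:results} together with the depth-$\leq 3$ isomorphisms of Theorem \ref{thm:rs}). Consistently with this, Gangl--Kaneko--Zagier identify the \emph{kernel}, not the cokernel, of the bracket map $\Lambda^2\lds^{(\bullet,1)}\to\lds^{(\bullet,2)}$ with period polynomials, so your step 3, carried out honestly, would contradict rather than anchor your reformulation. A smaller logical point: your step-2 ``equivalence'' is only an implication in the direction you need, since the Euler--Poincar\'e identity determines only the alternating sum $\sum_i(-1)^i\mathrm{Hilb}_{H_i}$ and individual homology groups can cancel.
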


In view of Conjecture \ref{conj:all iso} it is natural to also state a version of this conjecture for $\hlkv_2$.

\begin{conjecture}[Broadhurst-Kreimer conjecture for $\hlkv_2$]
\label{conj:BK 2}
The dimension of the bigraded component $\hlkv_2^{(W,D)}\subset \hlkv_2$ of weight $W$ and depth $D$ is $\BK_{W,D}$.
\end{conjecture}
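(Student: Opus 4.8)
A full proof is out of reach, since the predicted dimensions encode deep transcendence-theoretic information; matching the paper's aim, the realistic goal is to \emph{verify} the equality $\dim \hlkv_2^{(W,D)} = \BK_{W,D}$ in every bidegree $(W,D)$ up to a fixed weight bound. The plan has three ingredients: an explicit description of $\hlkv_2^{(W,D)}$ as the kernel of a computable integer matrix, an effective evaluation of the numbers $\BK_{W,D}$, and a rank computation large enough to reach the target weights.

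First I would make the defining equations explicit. Recall that $\hkv_2$ is cut out inside the tangential derivations $\tder_2$ of the free Lie algebra on two generators by the requirement that the derivation annihilate the sum of the two generators, together with a divergence ($\dv$) cocycle condition, and that $\hlkv_2$ is obtained by retaining only the lowest-depth part of these equations (Section~\ref{sec:kv def}). For fixed $(W,D)$ this presents $\hlkv_2^{(W,D)}$ as $\ker M_{W,D}$ for an explicit integer matrix $M_{W,D}$ whose columns are indexed by a monomial basis (words in the two generators) of the given weight and depth; I would generate these bases and assemble $M_{W,D}$ algorithmically.

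Second, I would compute $\dim\ker M_{W,D} = (\#\,\text{columns}) - \mathrm{rank}\,M_{W,D}$. Because the ambient dimensions grow roughly exponentially in $W$, exact rational elimination is infeasible near weight $29$, so I would compute $\mathrm{rank}\,M_{W,D}$ modulo several large primes with sparse linear algebra. A modular computation yields $\mathrm{rank}_{\mathbb{F}_p}M_{W,D}\le \mathrm{rank}_\Q M_{W,D}$, hence the rigorous inequality $\dim \hlkv_2^{(W,D)}\le \dim_{\mathbb{F}_p}\ker M_{W,D}$; the exact $\Q$-dimension is then certified either by a fraction-free rational computation in the smaller bidegrees, or, in the larger ones, by matching this modular upper bound against a lower bound coming from explicitly constructible elements (for instance the images of the free generators $\sigma_{2k+1}$ and the depth-$\le 3$ identification below). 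The depth grading block-decomposes $M_{W,D}$, which is what keeps the individual blocks in memory.

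Third, I would expand the generating function \eqref{equ:BK fun}, carrying the plethystic logarithm $\Log$ to the required order, to tabulate $\BK_{W,D}$ over the same range, and compare the two tables. Theorem~\ref{thm:rs} supplies a built-in consistency check: in depth $\le 3$ the inclusion $\lds \hookrightarrow \hlkv_2$ is an isomorphism, so those entries must coincide with the ones obtained when verifying Conjecture~\ref{conj:BK} for $\lds$ --- a mismatch would expose an implementation error --- whereas in depth $\ge 4$ the computation genuinely probes the conjectural isomorphism $\lds \cong \hlkv_2$. The principal obstacle is thus computational rather than conceptual: the sheer size of the systems $M_{W,D}$ forces sparse storage, modular arithmetic, and systematic use of the depth grading and any residual symmetry in place of naive Gaussian elimination. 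The mathematical caveat is that this establishes the conjecture only on the attained finite range of $(W,D)$, with the rigor of each dimension resting on the rank certification described above.
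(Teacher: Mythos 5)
Your overall framework is the same as the paper's: since this statement is a conjecture, the paper "proves" it only in the sense of Theorem \ref{thm:main}, i.e.\ a finite verification obtained by squeezing $\vdim\hlkv_2^{(W,D)}$ between a modular-arithmetic upper bound coming from the defining linear equations and a lower bound coming from explicitly constructed elements, then comparing with the Taylor coefficients of \eqref{equ:BK fun}. Your upper bound differs only in implementation: you encode the speciality condition $[X,G_1]+[Y,G_2]=0$ together with the linearized divergence condition as one matrix $M_{W,D}$ on a basis of $\tder_2^{(W,D)}$, whereas the paper never needs a presentation of $\sder_2$ at all --- it combines the closed formula for $\vdim\sder_2^{(W,D)}$ with the generating set of $\mathrm{im}\,\iota$ supplied by Lemma \ref{lem:genset} (cyclic words $(ab)$ for pairs of Lyndon words), and only computes the rank of $\Delta_Y$ on that generating set. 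Both routes are rigorous, and your inequality $\dim_\Q\ker M_{W,D}\le \dim_{\mathbb{F}_p}\ker (M_{W,D}\bmod p)$ is stated in the correct direction.

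The genuine gap is in your lower bound. The supply of elements you name --- the generators $\bar\sigma_{2k+1}$ (and Lie expressions in them) plus the depth $\le 3$ identification of Theorem \ref{thm:rs} --- cannot certify the dimensions in any depth $\ge 4$, which is precisely where the conjecture has content beyond known results. The point is that $\hlkv_2$ (like $\lds$) is conjecturally \emph{not} generated in depth $1$: the term $-\frac{s^{12}t^4}{(1-s^4)(1-s^6)}$ in the numerator of \eqref{equ:BK fun} records exceptional depth-$4$ generators attached to period polynomials of cusp forms. Concretely, at $(W,D)=(12,4)$ one has $\BK_{12,4}=1$, but any Lie word of weight $12$ and depth $4$ in the $\bar\sigma_{2k+1}$ involves four letters of weight $3$, i.e.\ is an iterated bracket in the single element $\bar\sigma_3$, hence vanishes identically; your lower bound there is $0$ and can never meet the upper bound $1$, so the verification stalls at weight $12$ and in infinitely many bidegrees of depth $\ge 4$ thereafter. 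The paper closes exactly this hole by adjoining Brown's period-polynomial elements $\bar\rho_P$ of weight $|P|+2$ and depth $4$ (from \cite{BrownDepth}, where they are shown to lie in $\lds\subset\hlkv_2$) to the generating supply before forming all Lie expressions. Two smaller omissions: the Lie expressions must be formed with the depth-graded bracket $[-,-]_Y$ (the ordinary bracket $[-,-]$ only preserves the depth filtration), and the lower-bound rank must itself be computed so that modular reduction can only shrink it --- which it does, so that part of your logic survives once the $\bar\rho_P$ are included.
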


The contribution of this work is the numeric computation of the numbers $\vdim( \hlkv_2^{(W,D)})$ for low weight $W$ and depth $D$.
The result is shown in Figure \ref{fig:results} below. 

\begin{thm}[Numeric computation]\label{thm:main}
    The dimensions $\mathrm{dim}( \hlkv_2^{(W,D)})$ of the bigraded components of $\hlkv_2$ of weight $W$ and degree $D$ are as shown in Figure \ref{fig:results}. 
    For all displayed entries $(W,D)$ the space $\hlkv_2^{(W,D)}$ is spanned by the images of elements of $\lds^{(W,D)}$ under the inclusion $\lds^{(W,D)}\subset \hlkv_2^{(W,D)}$.
\end{thm}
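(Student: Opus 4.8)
The plan is to reduce the computation of each bigraded dimension $\vdim(\hlkv_2^{(W,D)})$ to finite-dimensional linear algebra and then to exact modular rank computations. For fixed weight $W$ and depth $D$ the ambient space in which $\hlkv_2^{(W,D)}$ is defined (a space of tangential derivations, following the description recalled in section \ref{sec:kv def}) is finite-dimensional, and I would first fix an explicit integral basis for it. Because the defining relations of $\hlkv_2$ are, by construction, the lowest-depth — hence \emph{linear} — parts of the Kashiwara-Vergne equations, the component $\hlkv_2^{(W,D)}$ is exactly the kernel of an explicit integer matrix $M = M_{W,D}$ whose rows encode these relations, so that $\vdim \hlkv_2^{(W,D)} = n - \mathrm{rank}\, M$ with $n$ the dimension of the chosen ambient space. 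In depth $D \leq 3$ the spanning statement is already guaranteed by Theorem \ref{thm:rs}, so the genuinely new content concerns $D \geq 4$.

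The point that turns a machine computation into a rigorous proof is a squeeze between two modular rank computations, valid for any single prime $p$. On one hand, since $M$ has integer entries, $\mathrm{rank}_{\mathbb F_p} M \leq \mathrm{rank}_\Q M$, so
\[
\vdim_\Q \hlkv_2^{(W,D)} \;\leq\; n - \mathrm{rank}_{\mathbb F_p} M .
\]
On the other hand, I would produce explicit integral vectors $v_1,\dots,v_m$ that are the images under the inclusion $\lds^{(W,D)} \hookrightarrow \hlkv_2^{(W,D)}$ of a spanning set of $\lds^{(W,D)}$, and form the matrix $A$ with these as columns. Then $\mathrm{rank}_{\mathbb F_p} A \leq \vdim_\Q \mathrm{span}_\Q(v_i)$, and since this span lies inside $\hlkv_2^{(W,D)}$ we obtain
\[
\mathrm{rank}_{\mathbb F_p} A \;\leq\; \vdim_\Q\bigl(\mathrm{span}_\Q(v_i)\bigr) \;\leq\; \vdim_\Q \hlkv_2^{(W,D)} \;\leq\; n - \mathrm{rank}_{\mathbb F_p} M .
\]
If the two modular computations return the matching value $\mathrm{rank}_{\mathbb F_p} A = n - \mathrm{rank}_{\mathbb F_p} M$, then all the inequalities collapse to equalities: the common value is the sought dimension, and the span of the $v_i$ fills out all of $\hlkv_2^{(W,D)}$ over $\Q$. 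This simultaneously proves both assertions of the theorem, and crucially requires no "lucky prime" hypothesis, since each inequality holds for every $p$.

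To supply the vectors $v_i$ I would compute a basis of $\lds^{(W,D)}$ by the same kernel method applied to the linearized double-shuffle equations and then apply the explicit inclusion into $\hlkv_2$; alternatively one may generate the needed elements from the free generators $\sigma_{2k+1}$ together with the known low-depth structure. The main obstacle is computational scale rather than conceptual: the ambient dimension $n$ grows rapidly with $W$, so reaching weight $29$ demands computing ranks of large sparse matrices over $\mathbb F_p$ with careful sparse-linear-algebra and memory management, and it demands encoding the truncated Kashiwara-Vergne and double-shuffle relations — all sign and normalization conventions, and the precise prescription for dropping higher-depth terms — without error. I would validate the implementation against the depth $\leq 3$ isomorphisms predicted by Theorem \ref{thm:rs} and against independently known small-weight values before trusting the high-weight output.
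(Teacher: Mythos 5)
Your overall architecture --- squeezing $\vdim \hlkv_2^{(W,D)}$ between an upper bound given by the corank of the relation matrix and a lower bound given by the rank of a matrix of exhibited elements, with both ranks taken over $\mathbb{F}_p$ and each inequality valid for \emph{every} prime --- is exactly the paper's strategy, and that squeeze logic is sound. The genuine gap is in how you propose to produce the lower-bound vectors $v_i$. Your primary route is to obtain a spanning set of $\lds^{(W,D)}$ ``by the same kernel method applied to the linearized double-shuffle equations.'' But the kernel method you have set up is a mod-$p$ computation, and a mod-$p$ kernel vector of an integer matrix need not lift to a rational kernel vector: a vector satisfying the double-shuffle relations only modulo $p$ is \emph{not} an element of $\lds^{(W,D)}$, its image is not an element of $\hlkv_2^{(W,D)}$, and your chain of inequalities $\mathrm{rank}_{\mathbb{F}_p} A \leq \vdim_\Q \mathrm{span}_\Q(v_i) \leq \vdim_\Q \hlkv_2^{(W,D)}$ collapses at the first link. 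To make this route rigorous you would need an exact kernel basis over $\Q$ (or $\Z$), which at the ambient dimensions involved (e.g.\ $\vdim \sder_2^{(29,14)}=178305$) is a computation of an entirely different and much more expensive nature than a modular rank; you do not address this. The paper avoids the issue completely: its lower-bound vectors are exact integral objects \emph{by construction}, namely iterated $[-,-]_Y$-brackets of explicitly known closed-form elements, so that only the final rank --- which is legitimately one-sided --- is computed mod $p$.

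Your fallback, generating the needed elements ``from the free generators $\sigma_{2k+1}$ together with the known low-depth structure,'' fails outright, and this is the key ingredient you are missing. The bigraded Lie algebras $\lds$ and $\hlkv_2$ are \emph{not} generated by the depth-one elements $\bar\sigma_{2k+1}$: the Broadhurst--Kreimer generating function has an exceptional term supported in depths $2$ and $4$, reflecting quadratic relations among the $\bar\sigma$'s and, crucially, \emph{new generators in depth} $4$ attached to period polynomials of cusp forms. These are Brown's elements $\bar\rho_P$ of weight $|P|+2$ and depth $4$, which the paper includes in its generating supply alongside the $\bar\sigma_{2k+1}$. Concretely, $\hlkv_2^{(12,4)}$ is one-dimensional and spanned by $\bar\rho_P$ for the weight-$12$ cusp form, whereas the only Lie words in the $\bar\sigma_{2k+1}$ of weight $12$ and depth $4$ are words in four copies of $\bar\sigma_3$, all of which vanish; so a sigma-only lower bound is $0$ there and the squeeze never closes at $(12,4)$, nor at the many further entries receiving period-polynomial contributions. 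Note also that the $\bar\rho_P$ lie in $\lds$ (a theorem of Brown cited in the paper), and since $\lds$ is closed under $[-,-]_Y$ this is precisely what makes the second assertion of the theorem --- spanning by images of $\lds^{(W,D)}$ --- fall out of the squeeze; without identifying these elements you have no mechanism to produce that part of the statement either.
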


From this result and the various inclusions we can in particular confirm the aforementioned conjectures in low weight and depth, improving on the previously known bounds.

\begin{cor}
    The Broadhurst-Kreimer conjectures (Conjectures \ref{conj:BK} and  \ref{conj:BK 2}) hold for all pairs $(W,D)$ displayed in Figure \ref{fig:results}, and in particular for all weights $\leq 29$ independent of $D$.
\end{cor}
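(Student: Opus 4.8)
The plan is to deduce the corollary from Theorem \ref{thm:main} by an elementary dimension count, so that the two Broadhurst–Kreimer conjectures collapse to a single finite numerical check together with a vanishing argument at the unlisted depths.

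First I would use the spanning statement of Theorem \ref{thm:main} to upgrade the inclusion $\lds^{(W,D)}\hookrightarrow \hlkv_2^{(W,D)}$ to an isomorphism for each displayed pair $(W,D)$: it is injective because it is an inclusion, and surjective because $\hlkv_2^{(W,D)}$ is spanned by the images of $\lds^{(W,D)}$. Hence $\dim \lds^{(W,D)} = \dim \hlkv_2^{(W,D)}$, and Conjectures \ref{conj:BK} and \ref{conj:BK 2} assert one and the same equality in these bidegrees, namely that this common dimension equals $\BK_{W,D}$. Thus Conjecture \ref{conj:BK 2} follows directly from the computed values in Figure \ref{fig:results}, while Conjecture \ref{conj:BK} follows through the isomorphism.

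Next I would compute the $\BK_{W,D}$ by Taylor expanding the generating function \eqref{equ:BK fun} to the relevant order in $s,t$; below a fixed weight the plethystic logarithm contributes only finitely many Möbius terms $\tfrac{\mu(\ell)}{\ell}\log(\cdots)(s^\ell,t^\ell)$, so this is a mechanical finite computation. Comparing the resulting array with Figure \ref{fig:results} entry by entry then confirms both conjectures at every displayed pair.

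The hard part will be justifying the sharper claim that the conjectures hold for \emph{all} depths $D$ at each weight $W\leq 29$, not merely at the tabulated pairs. Here I would argue that Figure \ref{fig:results} already exhausts the nonvanishing bidegrees in this range: since every generator has weight at least $3$, the component $\hlkv_2^{(W,D)}$ can be nonzero only for $D$ in a window whose upper end grows linearly in $W$ (of order $W/3$), and inspection of \eqref{equ:BK fun} shows that $\BK_{W,D}=0$ in the same complementary range. Consequently, for $W\leq 29$ and any depth $D$ not appearing in the figure, both sides of the claimed equality vanish, and the agreement is automatic. All genuine content resides in Theorem \ref{thm:main}; granting it, the corollary reduces to this finite verification and the elementary vanishing bookkeeping.
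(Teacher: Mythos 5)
Your first two steps reproduce the paper's (implicit) argument exactly: Theorem \ref{thm:main} gives that the inclusion $\lds^{(W,D)}\subset \hlkv_2^{(W,D)}$ is onto, hence an isomorphism, so both conjectures reduce to the single numerical identity $\vdim \hlkv_2^{(W,D)}=\BK_{W,D}$, which is checked against the Taylor expansion of \eqref{equ:BK fun} for the tabulated entries. That part is fine.

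The gap is in your final step, and you correctly identified it as the hard part, but your proposed resolution does not work. You claim that $\hlkv_2^{(W,D)}$ vanishes a priori for $D$ beyond a window of order $W/3$ ``since every generator has weight at least $3$''. But $\hlkv_2$ is defined as a kernel inside $\sder_2$, not by generators; no generating set for it is known, and the statement that it is spanned by (brackets of) the elements $\bar\sigma_{2k+1}$ and $\bar\rho_P$ --- which is what would give every generator weight at least $3$ per unit of depth --- is precisely the content of Conjectures \ref{conj:all iso} and \ref{conj:BK 2}. So your vanishing argument assumes what is being verified. (The vanishing $\BK_{W,D}=0$ for $W<3D$, read off from \eqref{equ:BK fun}, is fine; it is the unconditional vanishing of $\hlkv_2^{(W,D)}$ in that range that has no a priori proof.) The paper instead treats these entries as part of the computation: the caption of Figure \ref{fig:results} states that for $W\leq 29$ \emph{all} depths $1,\dots,W$ were computed and the non-tabulated entries are zero, so the clause ``for all weights $\leq 29$ independent of $D$'' is itself a numerical result covered by Theorem \ref{thm:main}, not bookkeeping. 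Indeed, the paper reports that its single most expensive computation was the pair $(W,D)=(29,14)$ --- a bidegree with $D>W/3$ which your argument would declare automatically zero; had such a structural vanishing been available, the authors would not have needed to spend days verifying it. To repair your proof, replace the structural claim by an appeal to the fact that the figure's empty entries above the dashed line are computed zeros, matching $\BK_{W,D}=0$ there.
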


\begin{cor}
    Conjecture \ref{conj:all iso} holds in all weights up to and including weight 29.
\end{cor}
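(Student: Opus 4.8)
The plan is to deduce the statement from a single dimension count, feeding the numerical input of Theorem \ref{thm:main} (together with Theorem \ref{thm:rs} and the preceding corollary) into a squeezing argument. The structural observation that makes this work is that the generating function \eqref{equ:BK fun} specialises at $t=1$ to the weight Hilbert series of the free Lie algebra $\free$. Indeed, the second summand in the denominator of \eqref{equ:BK fun} carries the factor $t^2-t^4$, which vanishes at $t=1$, so that
\begin{equation*}
\sum_{D} \BK_{W,D}
= [s^W]\,\Log\!\left(\frac{1}{1-\frac{s^3}{1-s^2}}\right),
\end{equation*}
and by the usual Witt-type (PBW plus Möbius inversion) formula the right-hand side is exactly $\dim \free^{(W)}$, the plethystic $\Log$ of the enveloping-algebra series of the free Lie algebra on the generators $\sigma_3,\sigma_5,\sigma_7,\dots$. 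Hence $\sum_D \BK_{W,D} = \dim \free^{(W)}$ for every weight $W$.

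Next I would reduce each inclusion to a comparison of dimensions. The chain \eqref{equ:chain} consists of weight-graded inclusions, so $L\hookrightarrow L'$ is an isomorphism in weight $W$ as soon as $\dim L^{(W)}=\dim L'^{(W)}$; and since the depth filtration is bounded in each fixed weight, $\dim L^{(W)}=\sum_D \dim(\gr L)^{(W,D)}$, with the associated-graded maps in \eqref{equ:rsdiag} being genuine inclusions in each bidegree by Theorem \ref{thm:rs}. For the bottom edge of \eqref{equ:rsdiag}, Theorem \ref{thm:main} states that $\lds^{(W,D)}\to \hlkv_2^{(W,D)}$ is surjective, hence (being injective) an isomorphism, for all displayed $(W,D)$; combined with the preceding corollary this gives $\dim \lds^{(W,D)} = \dim \hlkv_2^{(W,D)} = \BK_{W,D}$ for all weights $W\le 29$.

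Then I would run the squeeze. Fix a weight $W\le 29$. The chain of weight-graded inclusions, the vertical inclusion $\gr \hkv_2 \hookrightarrow \hlkv_2$ from Theorem \ref{thm:rs}, and the identity of the first paragraph yield
\begin{align*}
\dim \free^{(W)} &\le \dim \grt_1^{(W)} \le \dim \ds^{(W)} \le \dim \hkv_2^{(W)} \\
&= \sum_D \dim(\gr \hkv_2)^{(W,D)} \le \sum_D \BK_{W,D} = \dim \free^{(W)}.
\end{align*}
Every inequality is therefore an equality, so the inclusions in \eqref{equ:chain} are isomorphisms in weight $W$. For \eqref{equ:rsdiag} I refine this bidegree by bidegree: equality of the weight-$W$ totals together with the termwise bounds $\dim(\gr \ds)^{(W,D)}\le \dim(\gr \hkv_2)^{(W,D)}\le \BK_{W,D}=\dim \lds^{(W,D)}$ forces equality in every bidegree $(W,D)$, so all four arrows of \eqref{equ:rsdiag} are isomorphisms in weights $\le 29$ as well.

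The only genuinely non-formal step is the identification $\sum_D \BK_{W,D} = \dim \free^{(W)}$: it rests on the $t=1$ specialisation annihilating the depth-$\ge 2$ correction term and on recognising the resulting univariate plethystic logarithm as the free Lie algebra Hilbert series. Everything else is bookkeeping with the finite depth filtration in each fixed weight, the one point requiring care being that the associated-graded maps in \eqref{equ:rsdiag} are honest inclusions, so that the termwise dimension bounds are available; this is exactly the content of Theorem \ref{thm:rs}.
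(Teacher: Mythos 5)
Your proof is correct and follows essentially the same route as the paper: specialise the Broadhurst--Kreimer series at $t=1$ to identify $\sum_D \BK_{W,D}$ with $\dim\free^{(W)}$, use the previous corollary and the vertical inclusion $\gr\hkv_2\hookrightarrow\hlkv_2$ from Theorem \ref{thm:rs} to bound $\dim\hkv_2^{(W)}$ from above, and squeeze the chain \eqref{equ:chain}. You merely spell out two points the paper leaves implicit, namely the verification that the $t=1$ specialisation gives the free Lie algebra Hilbert series and the bidegree-by-bidegree refinement handling the four arrows of \eqref{equ:rsdiag}.
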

\begin{proof}
    Here we forget the depth grading. This corresponds to setting $t=1$ in the Broadhurst-Kreimer conjecture. Note that the coefficient $\sum_D \BK_{W,D} = \vdim \free^{(W)}$ of $s^W$ then computes the dimension of the free Lie algebra $\free$ in the given weight $W$.
    Hence if conjecture \ref{conj:BK 2} holds for a given weight $W$ and all $D$ then $\vdim \hlkv_2^{(W)} = \vdim\free^{(W)}$.
    
    Furthermore, since $\gr \hkv_2^{(W)}\subset \hlkv_2^{(W)}$ by Theorem \ref{thm:rs} we have that $\vdim\hkv_2^{(W)} = \vdim \gr \hkv_2^{(W)}\leq \vdim \hlkv_2^{(W)}=\vdim\free^{(W)}$. Hence taking dimensions of the chain \eqref{equ:chain} we get 
    \[
    \vdim\free^{(W)}\leq \vdim \grt_1^{(W)} \leq \vdim \lds^{(W)} \leq \vdim\hkv_2^{(W)} = \vdim\free^{(W)},
    \]
    so that all dimensions must be equal and all inclusions \eqref{equ:chain} must be isomorphisms in that weight. By the same argument it also follows that all inclusions in \eqref{equ:rsdiag} must be isomorphisms in weight $W$ as well.
\end{proof}

\begin{rem}
    There have been several other works in the literature confirming Conjecture \ref{conj:BK} numerically, most notably \cite{datamine}.
    Our contribution is to extend the range of bidegrees in which it is verified, and to cover the Kashiwara-Vergne Lie algebra.
\end{rem}

\begin{figure}
\resizebox{13cm}{!}{
\begin{tabular}{c|c|c|c|c|c|c|c|c|c|c|c|}
    $W$, $D$ & 1 & 2 & 3 & 4 & 5 & 6 & 7 & 8 & 9 & 10 & 11  \\
    \hline
    3 & 1& & & & & & & & & & \\
    4 & 0& & & & & & & & & & \\
    5 & 1& & & & & & & & & & \\
    6 & 0& & & & & & & & & & \\
    7 & 1& & & & & & & & & & \\
    8 & 0& 1& & & & & & & & & \\
    9 & 1& & & & & & & & & & \\
    10 & 0& 1& & & & & & & & & \\
    11 & 1& 0& 1& & & & & & & & \\
    12 & 0& 1& 0& 1& & & & & & & \\
    13 & 1& 0& 2& & & & & & & & \\
    14 & 0& 2& 0& 1& & & & & & & \\
    15 & 1& 0& 2& 0& 1& & & & & & \\
    16 & 0& 2& 0& 3& & & & & & & \\
    17 & 1& 0& 4& 0& 2& & & & & & \\
    18 & 0& 2& 0& 5& 0& 1& & & & & \\
    19 & 1& 0& 5& 0& 5& & & & & & \\
    20 & 0& 3& 0& 7& 0& 3& & & & & \\
    21 & 1& 0& 6& 0& 9& 0& 1& & & & \\
    22 & 0& 3& 0& 11& 0& 7& & & & & \\
    23 & 1& 0& 8& 0& 15& 0& 4& & & & \\
    24 & 0& 3& 0& 16& 0& 14& 0& 1& & & \\
    25 & 1& 0& 10& 0& 23& 0& 11& & & & \\
    26 & 0& 4& 0& 20& 0& 27& 0& 5& & & \\
    27 & 1& 0& 11& 0& 36& 0& 23& 0& 2& & \\
    28 & 0& 4& 0& 27& 0& 45& 0& 16& & & \\
    29 & 1& 0& 14& 0& 50& 0& 48& 0& 7& &  \\
    \hdashline
    30 & 0& 4& 0& 35& 0& 73& 0& 37& 0& 2& ? \\
    31 & 1& 0& 16& 0& 71& 0& 85& 0& 24& ?& \\
    32 & 0& 5& 0& 43& 0& 113& 0& 79& 0& ?& \\
    33 & 1& 0& 18& 0& 96& 0& 147& 0& ?& & \\
    34 & 0& 5& 0& 54& 0& 166& 0& 155& ?& & \\
    35 & 1& 0& 21& 0& 127& 0& 239& 0& ?& & \\
    36 & 0& 5& 0& 66& 0& 239& 0& 281& ?& & \\
    37 & 1& 0& 24& 0& 165& 0& 375& ?& & & \\
    38 & 0& 6& 0& 78& 0& 336& 0& ?& & & \\
    39 & 1& 0& 26& 0& 213& 0& 564& ?& & & \\
    40 & 0& 6& 0& 94& 0& 458& 0& ?& & & \\
    41 & 1& 0& 30& 0& 266& 0& 834& ?& & & \\
    42 & 0& 6& 0& 111& 0& 615& 0& ?& & & \\
    43 & 1& 0& 33& 0& 333& 0& 1190& ?& & & \\
    44 & 0& 7& 0& 128& 0& 814& ?& & & & \\
    45 & 1& 0& 36& 0& 409& 0& ?& & & & \\
    46 & 0& 7& 0& 150& 0& 1055& ?& & & & \\
    47 & 1& 0& 40& 0& 498& 0& ?& & & & \\
    48 & 0& 7& 0& 173& 0& 1354& ?& & & & \\
    49 & 1& 0& 44& 0& 600& 0& ?& & & & \\
    50 & 0& 8& 0& 196& 0& 1717& ?& & & & \\
    51 & 1& 0& 47& 0& 720& 0& ?& & & & \\  
    52 & 0& 8& 0& 224& 0& 2149& ?& & & & \\
    53 & 1& 0& 52& 0& 851& 0& ?& & & & \\ 
    54 & 0& 8& 0& 254& 0& 2666& ?& & & & \\
    55 & 1& 0& 56& 0& 1005& 0& ?& & & & \\ 
    56 & 0& 9& 0& 284& 0& 3281& ?& & & & \\
    57 & 1& 0& 60& 0& 1176& 0& ?& & & & \\ 
    58 & 0& 9& 0& 320& 0& 3994& ?& & & & \\
    59 & 1& 0& 65& 0& 1368& 0& ?& & & & \\ 
    60 & 0& 9& 0& 357& 0& 4834& ?& & & & \\
    61 & 1& 0& 70& 0& 1582& ?& & & & & \\ 
    62 & 0& 10& 0& 395& 0& ?& & & & & \\
    63 & 1& 0& 74& 0& 1824& ?& & & & & \\ 
    64 & 0& 10& 0& 439& 0& ?& & & & & \\
    70 & 0& 11& 0& 585& 0& ?& & & & & \\
\end{tabular}
\hspace{1cm}
\begin{tabular}{c|c|c|c|c|c|c|}
    $W$, $D$ & 1 & 2 & 3 & 4 & 5 & 6\\
    \hline
    66 & 0& 10& 0& 485& 0& ? \\
    67 & 1& 0& 85& 0 & 2381& ? \\
    68 & 0& 11& 0& 531& 0& ? \\
    69 & 1& 0& 90& 0& 2703& ? \\ 
    70 & 0& 11& 0& 585& 0& ? \\
    71 & 1& 0& 96& 0& 3057& ? \\ 
    72 & 0& 11& 0& 640& 0& ? \\ 
    73 & 1& 0& 102& 0& 3444& ? \\ 
    74 & 0& 12& 0& 696& 0& ? \\ 
    75 & 1& 0& 107& 0& 3871& ? \\ 
    76 & 0& 12& 0& 759& 0& ? \\ 
    77 & 1& 0& 114& 0& 4328& ? \\ 
    78 & 0& 12& 0& 825& 0& ? \\ 
    79 & 1& 0& 120& 0& 4833& ? \\ 
    80 & 0& 13& 0& 891& 0& ? \\
    81 & 1& 0& 126& 0& 5376& ?  \\ 
    82 & 0& 13& 0& 966& 0&  ?\\ 
    83 & 1& 0& 133& 0& 5964& ? \\ 
    84 & 0& 13& 0& 1042& 0& ? \\ 
    85 & 1& 0& 140& 0& 6598& ? \\ 
    86 & 0& 14& 0& 1120& ?&  \\
    87 & 1& 0& 146& 0& ?&  \\ 
    88 & 0& 14& 0& 1206& ?&  \\ 
    89 & 1& 0& 154& 0& ?&  \\ 
    90 & 0& 14& 0& 1295& ?&  \\ 
    91 & 1& 0& 161& 0& ?&  \\ 
    92 & 0& 15& 0& 1384& ?&  \\ 
    93 & 1& 0& 168& 0& ?&  \\ 
    94 & 0& 15& 0& 1484& ?&  \\ 
    95 & 1& 0& 176& 0& ?&  \\ 
    96 & 0& 15& 0& 1585& ?&  \\ 
    97 & 1& 0& 184& 0& ?&  \\ 
    98 & 0& 16& 0& 1688& ?&  \\ 
    99 & 1& 0& 191& 0& ?&  \\ 
    100 & 0& 16& 0& 1800& ?&  \\ 
    101 & 1& 0& 200& 0& ?&  \\ 
    102 & 0& 16& 0& 1916& ?&  \\ 
    103 & 1& 0& 208& 0& ?&  \\ 
    104 & 0& 17& 0& 2032& ?&  \\ 
    105 & 1& 0& 216& 0& ?&  \\ 
    106 & 0& 17& 0& 2160& ?&  \\ 
    107 & 1& 0& 225& 0& ?&  \\ 
    108 & 0& 17& 0& 2289& ?&  \\ 
    109 & 1& 0& 234& 0& ?&  \\ 
    110 & 0& 18& 0& 2421& ?&  \\ 
    111 & 1& 0& 242& 0& ?&  \\ 
    112 & 0& 18& 0& 2563& ?&  \\ 
    113 & 1& 0& 252& 0& ?&  \\ 
    114 & 0& 18& 0& 2709& ?&  \\ 
    115 & 1& 0& 261& 0& ?&  \\ 
    116 & 0& 19& 0& 2855& ?&  \\ 
    117 & 1& 0& 270& 0& ?&  \\ 
    118 & 0& 19& 0& 3015& ?&  \\ 
    119 & 1& 0& 280& 0& ?&  \\ 
    120 & 0& 19& 0& 3176& ?&  \\ 
    121 & 1& 0& 290& 0& ?&  \\ 
    122 & 0& 20& 0& 3340& ?&  \\ 
    123 & 1& 0& 299& 0& ?&  \\ 
    124 & 0& 20& 0& 3515& ?&  \\ 
    125 & 1& 0& 310& 0& ?&  \\ 
    126 & 0& 20& 0& 3695& ?&  \\ 
    127 & 1& 0& 320& 0& ?&  \\
    \hline
    \multicolumn{1}{c}{} \\
\end{tabular}
}
    \caption{\label{fig:results} The $\Q$-dimensions $\vdim \lkv_2^{(W,D)}$ of the bigraded pieces of $\widehat \lkv_2$ in weight $W$ and depth $D$.
    The empty entries above the dashed line ($W\leq 29$) have been computed and are all zero, for all depths $1,\dots ,W$, although the zeroes are not displayed explicitly.
    Below the dashed line ($W\geq 30$) only the low depth entries up to the "?" have been computed, and are unknown to the right of and including the "?". All computed values agree with those predicted by Conjecture \ref{conj:BK}.}
\end{figure}

\section{The linearized Kashiwara-Vergne Lie algebra (recollections)}
In this section we recall the definition of the Kashiwara-Vergne Lie algebra and its linearized version, see \cite{AlekseevTorossian, RaphaelSchneps} for more details.
\subsection{Free Lie algebra and cyclic words}
Let $\f_2=\FreeLie(X,Y)$ be the free Lie algebra in symbols $X$ and $Y$.
It is naturally bigraded with the weight grading counting the total number of symbols $X$, $Y$, and the depth grading counting the number of occurrences of $Y$ only. Denoting by $\f_2^{(W)}\subset \f_2$ (resp. $\f_2^{(W,D)}\subset \f_2$) the component of fixed weight $W$ (resp. fixed weight and depth $(W,D)$) we have the well-known dimension formulas, with $\mu(-)$ the Möbius function,
\begin{align}\label{equ:dim flie}
    \vdim \f_2^{(W)} &= \frac 1W \sum_{d\mid W} \mu(d) \, 2^{W/d} 
    &
    \vdim \f_2^{(W,D)} &= \frac 1W \sum_{d\mid W \atop d\mid D} 
    \mu(d)\,
    \binom{W/d}{D/d} .
\end{align}
Let $\as_2$ be the free associative algebra in symbols $X$ and $Y$. Then there is a natural map of bigraded Lie algebras $\f_2\hookrightarrow \as_2$.
Furthermore, for $A$ some word in $X$ and $Y$ we define the operator $\partial_A : \as_2\to\as_2$ such that on words $W\in \as_2$
\[
\partial_A W = 
\begin{cases}
    B & \text{if $W=AB$ for some word $B$} \\
    0 & \text{otherwise}
\end{cases}.
\]

Next, we denote by $\cy_2=\as_2/[\as_2,\as_2]$ the vector space spanned by all cyclic words in letters $X$ and $Y$. We have maps 
\[
\as_2 \xrightarrow{\pi} \cy_2 \xrightarrow{\Sigma} \as_2,
\]
with the left-hand map the obvious projection and $\Sigma$ defined on a cyclic word $(W_0\cdots W_n)$ (with $W_j\in \{X,Y\}$) as
\[
\Sigma W = \sum_{j=0}^n W_jW_{j+1}\cdots W_nW_0\cdots W_{j-1}.
\]
We may then define on $\cy_2$ the operators 
\begin{align*}
\Delta_X &:= \pi \circ \partial_{XX} \circ \Sigma : \cy_2\to \cy_2 \\
\Delta_Y &:= \pi \circ \partial_{YY} \circ \Sigma : \cy_2\to \cy_2 \\
\Delta &:= \Delta_X+\Delta_Y : \cy_2\to \cy_2.
\end{align*}
Furthermore, we may define on $\cy_2$ three Lie brackets. The first is defined as follows:
\begin{gather*}
[-,-]_X : \cy_2\times \cy_2 \to \cy_2 \\
[A,B]_X:= ([X,\partial_X(\Sigma A)](\partial_X(\Sigma B))).  
\end{gather*}
The second Lie bracket $[-,-]_Y$ is defined in the same way by interchanging $X$ and $Y$,
\begin{gather*}
[A,B]_Y:= ([Y,\partial_Y(\Sigma A)](\partial_Y(\Sigma B))).  
\end{gather*}
The third is
\[
[A,B] = [A,B]_X + [A,B]_Y.
\]
In fact, any linear combination would be a Lie bracket.
To ensure compatibility with these Lie brackets we define the weight grading on $\cy_2$ such that a cyclic word of length $W+1$ has weight $W$.
We also define the depth grading as the number of $Y$ symbols occurring in words minus one. For example
\begin{align*}
(YXXYX)=(XYXXY)&\in \cy_2^{(4,1)}.
\end{align*}
All three Lie brackets respect the weight grading, and $[-,-]_Y$ also respects the depth grading. However, $[-,-]$ only preserves the associated depth filtration.


\subsection{Tangential and special derivations}
A tangential derivation of the Lie algebra $\f_2$ is a derivation $G:\f_2\to \f_2$ defined on generators as 
\begin{align*}
    G(X) &= [X,G_1] & G(Y)&=[Y,G_2] 
\end{align*}
for some $G_1\in \f_2$ and $G_2\in \f_2$. The tangential derivations naturally form a Lie algebra with the commutator as the Lie bracket.
We denote this Lie algebra of tangential derivations by $\tder_2$. As a vector space we have $\tder_2\cong \f_2\times \f_2$.
There is a natural bigrading on $\tder_2$ such that $(G_1,G_2)\in \tder_2$ is homogeneous of weight $W$ and depth $D$ if $G_1,G_2\in \f_2^{(W,D)}$.

A tangential derivation $(G_1,G_2)\in \tder_2$ is called {\em special} if 
\begin{equation}\label{equ:sder}
[X,G_1] + [Y,G_2]=0.
\end{equation}
Note that the map $[X,-]:\f_2\to \f_2$ is injective in weights $\geq 2$ and hence $G_1$ is uniquely determined once $G_2$ is known.

We denote the subspace of special derivations by $\sder_2\subset\tder_2$.
They form a Lie subalgebra. The weight grading is inherited by $\sder_2$ since the defining identity \eqref{equ:sder} is homogeneous with respect to weight. The latter equation is a priori not homogeneous with respect to the depth grading. However, we define $(G_1,G_2)$ to be homogeneous of depth $D$ if $G_2$ is a linear combination of terms with $D$ symbols $Y$ and $G_1$ is a linear combination of terms with $D+1$ symbols $Y$.

Since the map $\tder_2\to \f_2$, $(G_1,G_2)\mapsto [X,G_1] + [Y,G_2]$ is surjective in weights $\geq 2$ one may easily obtain the dimension formula 
\[
\vdim \sder_2^{(W,D)} = 
\vdim \f_2^{(W,D+1)} + \vdim \f_2^{(W,D)} - \vdim \f_2^{(W+1,D+1)},
\]
where the right-hand side can be computed explicitly via \eqref{equ:dim flie}.

\subsection{The Kashiwara-Vergne Lie algebra}\label{sec:kv def}
We define the map $\iota: \sder_2 \to \cy_2$ on a homogeneous element $(G_1,G_2)\in \sder_2^{(W,D)}$ as
\begin{gather*}
    \frac{1}{D+1} (Y G_2) \in \cy_2^{(W,D)}.
\end{gather*}
\begin{prop}
    The map $\iota$ is an injective map of (weight-)graded Lie algebras.
\end{prop}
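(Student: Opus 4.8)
The plan is to verify two things: that $\iota$ preserves brackets (with respect to the bracket $[\,\cdot\,,\cdot\,] = [\,\cdot\,,\cdot\,]_X + [\,\cdot\,,\cdot\,]_Y$ on $\cy_2$), and that it is injective; compatibility with the weight grading is immediate, since $(Y G_2)$ has length $W+1$ and hence weight $W$, and contains $D+1$ letters $Y$ and hence depth $D$, so that $\iota$ maps $\sder_2^{(W,D)}$ into $\cy_2^{(W,D)}$.

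For the homomorphism property the organizing idea is that $\iota(a)$ is a \emph{Hamiltonian} generating the derivation action of $a=(G_1,G_2)$ on $\cy_2$. Concretely, a tangential derivation $a$ descends to a derivation $L_a$ of $\cy_2 = \as_2/[\as_2,\as_2]$, and $L_a(c)$ is the sum over the letters of $c$ of the substitutions $X \mapsto [X,G_1]$ and $Y\mapsto [Y,G_2]$. I would first record the elementary ``opening'' identity: for a cyclic word $c$, the term of $[\,(YG_2),c\,]_Y$ coming from the distinguished leading $Y$ of $(YG_2)$ is exactly $([Y,G_2]\,\partial_Y\Sigma c)$, which is precisely the $Y$-part of $L_a(c)$, because $\partial_Y \Sigma c$ enumerates all openings of $c$ at its letters $Y$. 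The corresponding statement for $X$ would reproduce the $X$-part of $L_a(c)$, \emph{but only if} one may trade $(YG_2)$ for $(XG_1)$. This is where \eqref{equ:sder} enters: I would prove the lemma $(XG_1)=(YG_2)$ in $\cy_2$ for special $a$, so that the same cyclic word $\iota(a)$ can be opened either at its letters $X$ (feeding $G_1$ into $[\,\cdot\,,\cdot\,]_X$) or at its letters $Y$ (feeding $G_2$ into $[\,\cdot\,,\cdot\,]_Y$). The remaining, internal openings must then be shown to assemble into the bracket correction $(Y[a_2,b_2])$ appearing in the $G_2$-component $a(b_2)-b(a_2)+[a_2,b_2]$ of the commutator $[a,b]\in\sder_2$, and the normalization $\tfrac{1}{D+1}$ is fixed by the fact that $\Sigma$ distributes over all $D+1$ letters $Y$ of $(YG_2)$. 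Matching $\iota([a,b])$ with $[\iota(a),\iota(b)]$ term by term, using the lemma to supply the $G_1$-dependence that $(YG_2)$ does not display, is the technical heart of the argument and the step I expect to be the main obstacle.

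For injectivity I would argue as follows. The core is the purely free-Lie-algebraic lemma that $(YG)=0$ in $\cy_2$ if and only if $G\in[Y,\f_2]=\mathrm{im}(\ad_Y)$; the direction ``$\supseteq$'' is the cyclic identity $(Y[Y,H])=(YYH)-(YHY)=0$, while ``$\subseteq$'' is the statement that a cyclic word of positive depth determines its underlying Lie element up to $\ad_Y$. Granting this (and its mirror for $X$, together with the lemma $(XG_1)=(YG_2)$ above), if $\iota(a)=0$ then $G_2=[Y,H]$ and $G_1=[X,K]$, and the special relation \eqref{equ:sder} becomes $\ad_X^2 K = -\ad_Y^2 H$. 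I would then invoke $\mathrm{im}(\ad_X^2)\cap \mathrm{im}(\ad_Y^2)=0$ in weights $\geq 2$ to conclude that both sides vanish; since $\ad_X$ and $\ad_Y$ are injective on $\f_2$ in weights $\geq 2$ (their kernels are $\Q X$, $\Q Y$), this forces $G_1=G_2=0$, i.e.\ $a=0$.

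The two genuinely load-bearing inputs are therefore the combinatorial bracket identity in the homomorphism step and the free-Lie-algebra facts in the injectivity step; of these the former is the harder, since it requires controlling all the ``internal'' openings of $(YG_2)$ and $(Yb_2)$ and showing that the special condition \eqref{equ:sder} makes the $X$- and $Y$-contributions recombine exactly into the derivation commutator. A cleaner, more conceptual alternative I would keep in reserve is to recognize $\cy_2$ as the necklace Lie algebra of the one-loop quiver with its symplectic double bracket, under which special tangential derivations are exactly the Hamiltonian derivations and $\iota$ is the associated moment map; the homomorphism property is then the standard fact that the assignment of Hamiltonians to Hamiltonian vector fields is a Lie algebra map, and injectivity follows because the only Casimirs (constants) live in weight $0$.
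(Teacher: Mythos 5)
Your injectivity argument rests on a lemma that is false. You call it ``the core\dots purely free-Lie-algebraic lemma'': that $(YG)=0$ in $\cy_2$ if and only if $G\in[Y,\f_2]$. The direction you actually need ($\subseteq$) fails. Take $G=\ad_X^3Y = XXXY-3XXYX+3XYXX-YXXX$. Then
\[
\pi(Y\ad_X^3Y)=\pi(YXXXY)-3\pi(YXXYX)+3\pi(YXYXX)-\pi(YYXXX)=0,
\]
because $YXXXY$ and $YYXXX$ are cyclic rotations of one another, as are $YXXYX$ and $YXYXX$; yet $\ad_X^3Y\notin[Y,\f_2]$, since the bidegree-$(4,1)$ part of $[Y,\f_2]$ is $[Y,\f_2^{(3,0)}]=0$. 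The same pairwise cancellation of rotations shows $(Y\ad_X^{2k+1}Y)=0$ for every $k\geq 1$, so the kernel of $G\mapsto(YG)$ is strictly larger than $[Y,\f_2]$: a cyclic word does \emph{not} determine its Lie representative up to $\mathrm{im}(\ad_Y)$, and your deduction ``$G_2=[Y,H]$, $G_1=[X,K]$'' collapses. The paper's injectivity proof shows why speciality must enter from the start rather than only at the end: equation \eqref{equ:sder} says exactly that $P:=XG_1+YG_2$ equals $G_1X+G_2Y$, i.e.\ that $P$ is invariant under cyclic rotation, whence $\Sigma\pi(P)=(W+1)P$ and both components are \emph{recovered} from the cyclic word via $G_1=\partial_X\Sigma(\iota(a))$, $G_2=\partial_Y\Sigma(\iota(a))$. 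Injectivity is this explicit recovery, not a kernel computation for the non-special map $G\mapsto (YG)$. The same recovery also shows that your auxiliary lemma ``$(XG_1)=(YG_2)$'' has the wrong constant: counting letters gives $\pi(XG_1)=\tfrac{W-D}{D+1}\,\pi(YG_2)$, so your identity holds only when $W=2D+1$, and any term-by-term matching in the homomorphism step that uses it is off by this factor.

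For the Lie-map property, your primary route is by your own admission not carried out, and it would inherit the faulty normalization just described. Your ``reserve'' route is, in substance, the paper's actual proof: the paper cites \cite[Proposition 1.4]{vandenBergh} (the double Poisson/necklace formalism) for the fact that $H\colon\cy_2\to\mathbf{Der}(\as_2)$, $H(\alpha)(X)=[X,\partial_X\Sigma\alpha]$, $H(\alpha)(Y)=[Y,\partial_Y\Sigma\alpha]$, is a map of Lie algebras. But note what must still be supplied to close that argument: first, $H(\iota(a))=a$ for special $a$ --- precisely the recovery formulas above, which is where speciality is used and which you assert rather than prove (``special derivations are exactly the Hamiltonian ones''); second, that $\iota([a,b])-[\iota(a),\iota(b)]$, which a priori only lies in $\ker H$, actually vanishes. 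Here your Casimir remark is also inaccurate: the Casimirs of this bracket are not concentrated in weight $0$ --- they are spanned by the one-letter cyclic words $(X^n)$ and $(Y^n)$, of positive weight --- and the relevant point is that the two sides of the homomorphism identity are combinations of cyclic words containing both letters, hence meet $\ker H$ trivially. So the conceptual approach you kept in reserve is the right one (and the paper's), but as written your proposal establishes neither injectivity (false lemma) nor the homomorphism property (missing core).
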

\begin{proof}
We first show injectivity. Set $\alpha = \frac{1}{1+W}\pi ( X G_1 + Y G_2 )$. Using $XG_1 + YG_2 = G_1X + G_2Y$ we obtain $\Sigma(\alpha) = X G_1 + Y G_2$. Moreover, using that $\pi(y \partial_Y \Sigma(\beta)) = (D+1) \beta$ for any $\beta \in \cy_2^{(W,D)}$ we first obtain
\[
\alpha = \frac{1}{D+1} \pi(y \partial_Y \Sigma(\alpha)) = \frac{1}{D+1} \pi(y \partial_Y (X G_1 + Y G_2 )) = \frac{1}{D+1} \pi(y G_2),
\]
and hence $G_1 = \partial_X \Sigma (\frac{1}{D+1} \pi(y G_2))$ and $G_2 = \partial_Y \Sigma (\frac{1}{D+1} \pi(y G_2))$. Finally, it follows from \cite[Proposition 1.4]{vandenBergh} that the map $H \colon \cy_2 \to \mathbf{Der}(\as_2)$ given by $H(\alpha)(X) = [X, \partial_X \Sigma(\alpha)], H(\alpha)(Y) = [Y, \partial_Y \Sigma(\alpha)]$ is a map of Lie algebras.
\end{proof}

\begin{defi}
    The Kashiwara-Vergne Lie algebra $\kv_2\subset \sder_2$ is the kernel 
    \[
    \kv_2 := \ker (\Delta\circ \iota),
    \]
    equipped with the Lie bracket $[-,-]$ of $\sder_2$.
    The linearized Kashiwara-Vergne Lie algebra $\kv_2\subset \sder_2$ is the kernel 
    \[
    \lkv_2 := \ker (\Delta_Y\circ \iota),
    \] 
    equipped with the Lie bracket $[-,-]_Y$.
    The extended linearized Kashiwara-Vergne Lie algebra $\hlkv_2\subset \sder_2$ is the kernel 
    \[
    \hlkv_2 := \ker (\pi_{\geq 2}\Delta_Y\circ \iota)
    \]
    where $\pi_{\geq 2}:\cy_2\to \cy_2$ is the projection onto the part of depth $\geq 2$.
    The extended Kashiwara-Vergne Lie algebra $\hkv_2\subset \sder_2$ is the preimage 
    \[
    \hkv_2 := (\Delta\circ \iota)^{-1}(V),
    \]
    of the subspace $V\subset \cy_2$ spanned by the linear combinations of cyclic words $(X+Y)^n -X^n-Y^n$.
\end{defi}
We have natural inclusions of Lie algebras with the standard bracket $[-,-]$
\[
\kv_2 \subset \hkv_2\subset \sder_2\subset \cy_2
\]
and natural inclusions of Lie algebras with respect to the bracket $[-,-]_Y$
\[
\gr \kv_2 \subset \lkv_2\subset \hlkv_2\subset \sder_2\subset \cy_2.
\]
\begin{ex}\label{ex:sigma}
The depth 1 elements $\bar \sigma_{2k+1}=(\frac12 \sum_{j=0}^{2k-1} [\ad_X^j Y, \ad_X^{2k-1-j}Y ], \ad_X^{2k} Y)\in \sder_2$ satisfy 
\[
\iota \bar \sigma_{2k+1} = \frac 12(Y \ad_X^{2k} Y) \in \cy_2
\]
and by degree (depth) reasons we have
\[
\Delta_Y \iota \bar \sigma_{2k+1}= (X^{2k}Y) \in \cy_2^{(W,1)}.
\]
Hence $\bar \sigma_{2k+1}\in \hlkv_2$.
\end{ex}

Generally, we have $\lkv_2^{(W,1)}=0$ for all $W$ and 
\[
\hlkv_2^{(W,D)} = 
\begin{cases}
    \Q \bar\sigma_{2k+1} & \text{if $W=2k+1$ and $D=1$} \\
    0 & \text{if $W=2k$ and $D=1$} \\
    \lkv_2^{(W,D)} & \text{if $D\geq 2$}
\end{cases}.
\]
Hence for our numerical experiments it is sufficient to compute the dimension of $\lkv_2^{(W,D)}$, the dimension of $\hlkv_2^{(W,D)}$ is then determined by the above formula.

\begin{rem}
    The above definition of $\lkv_2$ coincides with the one given in \cite[Definition 5]{RaphaelSchneps}. In loc. cit. two conditions on $G_2 \in \f_2$ (or $b$ in their notation) are given. The first condition (\emph{push-invariance}) is ensuring that there exists a corresponding $G_1$ and is equivalent to $G_2 = \partial_Y \tfrac{1}{D+1}\Sigma \pi(YG_2)$. The second condition (\emph{circ-neutrality}) is $\partial_Y \Sigma \pi( \partial_Y G_2) = 0$ whenever $G_2$ is in depth $\geq 2$. But this is equivalent to $\pi( \partial_Y G_2) = \tfrac{1}{D+1}\pi ( \partial_{YY} \Sigma \pi (YG_2) ) = \tfrac{1}{D+1} \Delta_Y (YG_2)$.
\end{rem}

\subsection{Dimension of $\lkv_2^{(W,D)}$}
From the definition we see that we may compute the dimensions of the bigraded pieces $\lkv_2^{(W,D)}$ of $\lkv_2$ as 
\[
\vdim\lkv_2^{(W,D)} = 
\vdim \sder_2^{(W,D)}
- 
rank\left(\sder_2^{(W,D)} \xrightarrow{\Delta_Y\iota}  \cy_2^{(W-1,D-1)}\right).
\]
However, this requires that we can write down a basis or generating set of $\sder_2^{(W,D)}$. We do not know an efficient construction of a basis.
But we may use the following lemma to get a generating set efficiently.
\begin{lemma}\label{lem:genset}
    Let $W_1,W_2\geq 1$ be numbers such that $W_1+W_2=W+1$. Then the image of $\iota:\sder_2^{(W,D)}\to \cy_2^{(W,D)}$ is the same as the image of the map 
    \begin{equation}\label{equ:genset}
    \begin{gathered}
    \iota_{W_1,W_2, D} \colon \bigoplus_{D_1+D_2=D+1} \f_2^{(W_1,D_1)}\otimes \f_2^{(W_2,D_2)} \to \cy_2^{(W,D)} \\
    a\otimes b \mapsto (ab).
    \end{gathered}
    \end{equation}
\end{lemma}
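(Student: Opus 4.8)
The plan is to prove the two inclusions $\mathrm{Im}(\iota)\subseteq\mathrm{Im}(\iota_{W_1,W_2,D})$ and $\mathrm{Im}(\iota_{W_1,W_2,D})\subseteq\mathrm{Im}(\iota)$ separately, after first reducing to a single convenient value of the splitting. For the reduction I would show that $\mathrm{Im}(\iota_{W_1,W_2,D})$ does not depend on the choice of $W_1,W_2$ with $W_1+W_2=W+1$. The key is the cyclic identity $\pi(a[c,z])=\pi([z,a]c)$, valid for any $a,c\in\as_2$ and any single letter $z\in\{X,Y\}$, which follows at once from $\pi(uv)=\pi(vu)$. Since every element of $\f_2^{(W_2,D_2)}$ with $W_2\geq 2$ is a linear combination of left-normed brackets $[c,z]$ with $c\in\f_2$ and $z\in\{X,Y\}$, this rewrites each generator $\pi(ab)=\pi(a[c,z])=\pi([z,a]c)$ of $\mathrm{Im}(\iota_{W_1,W_2,D})$ as a generator of $\mathrm{Im}(\iota_{W_1+1,W_2-1,D})$, and one checks that the total depth $D+1$ is preserved. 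Running this together with its mirror image (from $\pi(ab)=\pi(ba)$) shows all these images coincide, so it suffices to compare $\mathrm{Im}(\iota)$ with $\mathrm{Im}(\iota_{1,W,D})=\pi\bigl(X\f_2^{(W,D+1)}+Y\f_2^{(W,D)}\bigr)$.

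The inclusion $\mathrm{Im}(\iota)\subseteq\mathrm{Im}(\iota_{1,W,D})$ is immediate: by the computation in the proof of the Proposition above one has $\iota(G_1,G_2)=\tfrac1{W+1}\pi(XG_1+YG_2)$ for $(G_1,G_2)\in\sder_2^{(W,D)}$, with $G_1,G_2\in\f_2$. For the reverse inclusion I would first record a reconstruction statement. For an arbitrary $\alpha\in\cy_2^{(W,D)}$ the element $\Sigma\alpha$ is invariant under cyclic rotation, so writing $\Sigma\alpha=X\,\partial_X\Sigma\alpha+Y\,\partial_Y\Sigma\alpha=(\partial_X\Sigma\alpha)X+(\partial_Y\Sigma\alpha)Y$ (the second equality by rotation invariance) gives $[X,\partial_X\Sigma\alpha]+[Y,\partial_Y\Sigma\alpha]=0$ in $\as_2$. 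Combining this with $\pi\Sigma=(W+1)\,\mathrm{id}$ and the identities $\partial_X\Sigma\alpha=G_1,\ \partial_Y\Sigma\alpha=G_2$ extracted from the proof of the Proposition, one obtains the clean characterization $\mathrm{Im}(\iota)=\{\alpha\in\cy_2^{(W,D)}:\partial_X\Sigma\alpha\in\f_2\ \text{and}\ \partial_Y\Sigma\alpha\in\f_2\}$: indeed if both reconstructions are Lie then $(\partial_X\Sigma\alpha,\partial_Y\Sigma\alpha)\in\sder_2^{(W,D)}$ and $\iota$ sends it back to $\alpha$.

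Thus the reverse inclusion reduces to the crux: for Lie elements $a,b\in\f_2$ the reconstructions $\partial_X\Sigma\pi(ab)$ and $\partial_Y\Sigma\pi(ab)$ are again Lie. Equivalently, writing $\mathrm{Im}(\iota_{1,W,D})=\mathrm{Im}(\psi)$ for $\psi(G_1,G_2)=\pi(XG_1+YG_2)$ on $\f_2^{(W,D+1)}\oplus\f_2^{(W,D)}$ and using $\dim\sder_2^{(W,D)}=\dim\f_2^{(W,D+1)}+\dim\f_2^{(W,D)}-\dim\f_2^{(W+1,D+1)}$, the claim becomes $\sder_2^{(W,D)}\oplus\ker\psi=\f_2^{(W,D+1)}\oplus\f_2^{(W,D)}$. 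The intersection $\sder_2^{(W,D)}\cap\ker\psi$ vanishes for free, since any $(G_1,G_2)$ in it satisfies $\iota(G_1,G_2)=\tfrac1{W+1}\psi(G_1,G_2)=0$ and $\iota$ is injective; so what genuinely remains is the dimension bound $\dim\ker\psi\geq\dim\f_2^{(W+1,D+1)}$, which I would produce from an explicit (Dynkin) section of $\delta\colon(G_1,G_2)\mapsto[X,G_1]+[Y,G_2]$ that lands inside $\ker\psi$.

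I expect this last point — the Lie-ness of $\partial_X\Sigma\pi(ab)$, or equivalently that the Dynkin section lands in $\ker\psi$ — to be the main obstacle, since it is the only step that genuinely uses the free Lie structure rather than formal rotation/cyclicity manipulations. The cleanest route is probably to verify directly that $\partial_X\Sigma\pi(ab)$ is primitive for the unshuffle coproduct on $\as_2$, exploiting that $a$ and $b$ are primitive; alternatively one can lean on the necklace/double-bracket formalism behind van den Bergh's Proposition 1.4 to identify the derivation $H(\pi(ab))$ as a tangential special derivation of $\f_2$, which is precisely the assertion that its defining data $\partial_X\Sigma\pi(ab),\partial_Y\Sigma\pi(ab)$ lie in $\f_2$.
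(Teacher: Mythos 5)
Your formal steps are correct, and they in fact mirror the paper's own argument: the independence of $\mathrm{Im}(\iota_{W_1,W_2,D})$ from the splitting $(W_1,W_2)$, via sliding brackets $\pi(a[c,z])=\pi([z,a]c)$, is exactly the paper's second step (phrased there as the statement that $\bigoplus \f_2^{(W_1+1,D_1)}\otimes\f_2^{(W_2-1,D_2)}$ and $\bigoplus \f_2^{(W_1,D_1)}\otimes\f_2^{(W_2+1,D_2)}$ have the same image in coinvariants, using $\f_2^{(W+1,D)}=[X,\f_2^{(W,D)}]+[Y,\f_2^{(W,D-1)}]$). Likewise the identity $\iota(G_1,G_2)=\tfrac{1}{W+1}\pi(XG_1+YG_2)$, the rotation-invariance argument giving $\Sigma\alpha = X\,\partial_X\Sigma\alpha+Y\,\partial_Y\Sigma\alpha=(\partial_X\Sigma\alpha)X+(\partial_Y\Sigma\alpha)Y$, and the resulting characterization of $\mathrm{Im}(\iota)$ as the set of $\alpha$ with $\partial_X\Sigma\alpha,\partial_Y\Sigma\alpha\in\f_2$ are all fine.

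However, there is a genuine gap exactly where you flag it: the claim that $\partial_X\Sigma\pi(ab)$ and $\partial_Y\Sigma\pi(ab)$ lie in $\f_2$ whenever $a,b\in\f_2$ (equivalently, that your Dynkin-type section of $\delta$ lands in $\ker\psi$) is never proved; you only name two candidate strategies. This claim is the entire non-formal content of the lemma --- it is precisely the assertion that $\sder_2\to\mathrm{Sym}^2(\f_2)_{\f_2}$ is an isomorphism, which the paper obtains by citing the proof of Drinfeld's Proposition 6.1; everything else in both your argument and the paper's is bookkeeping with rotations. Neither of your sketches closes it. Primitivity of $\partial_X\Sigma\pi(ab)$ under the unshuffle coproduct is not a formal consequence of the primitivity of $a$ and $b$: the operator $\partial_X\Sigma$ does not interact simply with the coproduct, so a genuine computation is required, and that computation is the heart of Drinfeld's proof. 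As for van den Bergh, his Proposition 1.4 (as used in the paper) only yields that $H$ is a map of Lie algebras $\cy_2\to\mathbf{Der}(\as_2)$; for \emph{any} $\alpha\in\cy_2$ the derivation $H(\alpha)$ is automatically ``tangential and special'' with components in $\as_2$ (indeed $[X,\partial_X\Sigma\alpha]+[Y,\partial_Y\Sigma\alpha]=0$ follows from rotation invariance alone), so the actual issue --- that for $\alpha=\pi(ab)$ the components lie in $\f_2$, i.e.\ that $H(\alpha)$ preserves $\f_2\subset\as_2$ --- is not addressed by that citation. Note also that your reformulation via $\dim\ker\psi\geq\dim\f_2^{(W+1,D+1)}$ is logically equivalent to the original claim (given injectivity of $\iota$ and the dimension formula for $\sder_2$), so it is a restatement rather than a reduction in difficulty. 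Until one of these routes is actually executed, or Drinfeld's result is invoked, the inclusion $\mathrm{Im}(\iota_{1,W,D})\subseteq\mathrm{Im}(\iota)$, and with it the lemma, remains unproven.
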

In words, the linear combination of cyclic words $(ab)$ is formed by first applying the inclusion $\f_2\to\as_2$ to $a$ and $b$, then concatenating the resulting words to $ab$, then projecting to cyclic words.
\begin{proof}
    The proof of \cite[Proposition 6.1]{Drinfeld} shows that the map $\iota$ factors as $\sder_2 \to \mathrm{Sym}^2(\f_2)_{\f_2} \to \cy_2$, where the first map is an isomorphism. But since $\f_2^{(W+1, D)} = [X, \f_2^{(W,D)}] + [Y, \f_2^{(W, D-1)}]$ we obtain that
    \[
    \bigoplus_{D_1 + D_2 = D+1} \f_2^{(W_1+1,D_1)} \otimes \f_2^{(W_2-1,D_2)} 
    \]
    and
    \[
    \bigoplus_{D_1 + D_2 = D+1} \f_2^{(W_1,D_1)} \otimes \f_2^{(W_2+1,D_2)} 
    \]
    have the same image in coinvariants of $\f_2 \otimes \f_2$.
\end{proof}
Hence we have that for any choice of $W_1,W_2\geq 1$ such that $W_1+W_2=W+1$
\begin{equation}\label{equ:lkvdim}
\vdim\lkv_2^{(W,D)} = 
\vdim \sder_2^{(W,D)}
- 
rank( \Delta_Y\circ\iota_{W_1,W_2, D}).
\end{equation}

\section{Computational methods}
We proceed in two separate steps to compute the numbers $C_{W,D}:=\mathrm{dim}( \hlkv_2^{(W,D)})$: We separately compute a lower bound $\underline C_{W,D}$ on $C_{W,D}$ and an upper bound $\overline C_{W,D}$. Both bounds happen to agree, $\underline C_{W,D}=\overline C_{W,D}$ for all pairs $(W,D)$ displayed in Figure \ref{fig:results}, and hence Theorem \ref{thm:main} follows.

\subsection{Computation of the upper bound}
We get an upper bound by using \eqref{equ:lkvdim}, computing the rank over prime fields to avoid memory issues.
More precisely, we proceed as follows:
\begin{enumerate}
    \item We use Lemma \ref{lem:genset} to produce a generating set of $\sder_2^{W,D}$ by computing a list of pairs of Lyndon words $(A,B)$ such that $W=|A|+|B|-1$ and of total depth $D+1$. In practice, it was most economic to choose the lengths $|A|$, $|B|$ to be approximately $W/2$ each.
    \item We also produce a list of all cyclic words of length $W$ and depth $D$.
    \item For each pair of Lyndon words we compute the image in cyclic words of the corresponding $\sder_2$-element (see \eqref{equ:genset}), and apply the half-divergence operator $\Delta_Y$.
    The result is a linear combination of cyclic words of length $W$ and depth $D-1$, that yields one row of a matrix $A'$, which we reduce to mod $p$ coefficients, for $p$ a suitable prime number. Concretely, we chose $p=3323$.
    \item The matrix $A'$ is compressed to a square matrix $A$ of size $(\vdim\sder_2^{W,D})\times (\vdim\sder_2^{W,D})$, likely retaining the rank. This is done by randomly adding multiples of the last $k$ columns (resp. rows) to the first columns (rows). This yields a dense matrix $A$ of shape $(\vdim\sder_2^{W,D})\times (\vdim\sder_2^{W,D})$ with entries in $\mathbb F_p$.
    \item The rank $r_A$ of the matrix $A$ (mod $p$) is computed using straightforward Gaussian elimination. Our upper bound is then 
    \[
    \overline C_{W,D} := \vdim\sder_2^{W,D} - r_A.
    \]
\end{enumerate}
We note that the matrix $A'$ is in fact never computed in full as it would typically not fit into memory. Instead the compression (step 4) is performed directly after computing each matrix row, greatly reducing memory pressure and runtime.
Also note that both the use of modular arithmetic and the compression step 4 may reduce the rank of the matrix, but never increase it. Hence we indeed have that 
\[
    \overline C_{W,D} \geq  C_{W,D}
\]
as required. Also note that all steps of the algorithm allow for obvious parallelization. 

\subsection{Computation of the lower bound}
Note that we know the following classes of $\hlkv_2$: 
\begin{itemize}
    \item The elements $\bar \sigma_{2k+1}$ of Example \ref{ex:sigma} in depth 1 and weight $2k+1$. 
    \item For each period polynomial $P$ the Brown elements
    \[
    \bar \rho_P
    \]
    of weight $|P|+2$ and depth 4, see \cite{BrownDepth} for the definition. They are in fact in the subspace $\lds\subset\hlkv_2$ as shown by Brown \cite{BrownDepth}.
\end{itemize}
Now we can produce a supply of elements of $\hlkv_2^{(W,D)}$ by computing all possible Lie expressions of the elements $\bar \sigma_{2k+1}$ and $\bar \rho_P$ of suitable weight and degree. Let $\psi_1,\dots,\psi_N\in \hlkv_2^{(W,D)}$ be the list of all elements thus produced. Then a lower bound on the dimension of $\hlkv_2^{(W,D)}$ is obtained by the dimension of the span of these elements. Algorithmically, we proceed as follows:
\begin{enumerate}
    \item Produce a list of all cyclic words of length $W$ and depth $D$.
    \item By a straight-forward recursion produce the list of generators $\psi_1,\dots,\psi_N\in \hlkv_2^{(W,D)}$ as above.
    \item Compute the matrix $B'$ whose $j$-th row is the expression of $\psi_j$ in the basis of cyclic words produced in step 1.
    \item Compress $B'$ to a smaller matrix $B$ as in the previous subsection and compute its rank $r_{B}$ modulo a prime number $p$. Our lower bound is then $\underline C_{W,D}:=r_B$.
\end{enumerate}
We have that the dimension of the span of $\psi_1,\dots,\psi_N$ is at least $r_B$, so that indeed 
\[
\underline C_{W,D} \leq C_{W,D}.
\]

\subsection{Hardware and runtime}
We ran our program on the ETH math department's 64-core AMD EPYC servers with 512 GB of memory.
The runtimes depend on the weight-depth pair $(W,D)$ and the degree of parallelism employed. We did not record runtimes for each individual $(W,D)$. Since the machines are shared with other users they also depend on the current load.
However, the dominant contribution to the runtime comes from the computation of the rank of the dense matrix $A$ above, at least for higher depths.
For example, for $(W,D)=(29,11)$ we have that $\vdim \sder_2^{(W,D)}=99591$, so that the matrix $A$ has size $99591\times 99591$. We produce $191931$ generating vectors and compute the matrix $A$ in roughly 18 minutes on 64 cores. The rank computation took 10h45 on 64 cores. For the hardest entry in weight 29 (depth 14) the computation of the $178305\times 178305$-matrix $A$ took 1h16, and the rank computation around 4 days, each using (up to) 64 cores.
For the next higher weight 30, that we did not compute fully, the hardest entry (depth 15) would have produced a square matrix of size $322938$, with an estimated runtime of around a month, which seemed excessive. The partial computations in low depth and high weight were less computationally expensive, with the hardest pairs $(W,D)$ computed ($D=5$) taking a few hours.  
The matrix $A$ is also mainly responsible for the memory footprint, as otherwise no large objects have to be held in memory.
The interested reader can find the source code of our programs here:

\begin{center}
\url{https://github.com/wilthoma/lkv}.
\end{center}

\end{document}